\renewcommand*\subjclass[2][2000]{%
  \def\@subjclass{#2}%
  \@ifundefined{subjclassname@#1}{%
    \ClassWarning{\@classname}{Unknown edition (#1) of Mathematics
      Subject Classification; using '1991'.}%
  }{%
    \@xp\let\@xp\subjclassname\csname subjclassname@#1\endcsname
  }%
}
\newtheorem{theorem}{Theorem}[section]
\newtheorem*{lemma*}{Lemma}
\newtheorem{corollary}[theorem]{Corollary}
\theoremstyle{definition}
\newtheorem{example}[theorem]{Example}
\theoremstyle{remark}
\newtheorem{remark}[theorem]{Remark}
\numberwithin{equation}{section}
\def\XXint#1#2#3{{\setbox0=\hbox{$#1{#2#3}{\int}$}
\vcenter{\hbox{$#2#3$}}\kern-.5\wd0}}
\def\le{\leqslant}
\def\ge{\geqslant}
\begin{document}

\title{Sharp pointwice estimate for Fock spaces}  \subjclass{Primary  	30D15;
Secondary 46E15}

%\date{11 October, 2005}

\keywords{Fock space, pointwise estimate}

 \author{Friedrich Haslinger}
\thanks{The first-name author was partially supported by the Austrian Science Fund, FWF-Projekt P 28154.}
\address{Faculty of Mathematics
University of Vienna,  Faculty of
Mathematics, Oskar-Morgenstern-Platz 1, 1090 Wien, Austria}

 \email{friedrich.haslinger@univie.ac.at}
\author{ David Kalaj}
\address{Faculty of Natural Sciences and
Mathematics, University of Montenegro, Cetinjski put b.b. 81000
Podgorica, Montenegro} \email{davidk@ucg.ac.me}
\author{Djordjije Vujadinovi\'c}
\address{Faculty of Natural Sciences and
Mathematics, University of Montenegro, Cetinjski put b.b. 81000
Podgorica, Montenegro} \email{djordjijevuj@ucg.ac.me}
\begin{abstract}
 Firstly we establish a sharp pointwise estimate for the arbitrary derivative of the function $f\in F_{\alpha}^{p},$ where $F_{\alpha}^{p}$ denotes the Fock space for $1\leq p<\infty.$  Then, in a particular Hilbert case when $p=2$ we establish another specific pointwise sharp estimate. We also consider the differential operator between $F_{\alpha}^{p}$  and $F_{\beta}^{p}$ for $\beta>\alpha$ and its adjoint.
\end{abstract}
\maketitle
\section{Introduction}
Let $\mathbb{C}$ be as usual the complex plane and by $dA(z)(=dxdy)$ we denote the Lebesgue measure on the complex plane. Through the paper for any positive parameter $\alpha$ we consider the Gaussian-probability measure
$$d\mu_{\alpha}(z)=\frac{\alpha}{\pi}e^{-\alpha|z|^{2}}dA(z).$$

For $1\leq p<\infty,$ $L^{p}(\mathbb{C},d\mu_{\alpha})$ denotes the space of all Lebesgue measurable functions $f$ on $\mathbb{C}
$ such that
$$\|f\|_{p,\alpha}^{p}=\frac{p\alpha}{2\pi}\int_{\mathbb{C}}|f(z)|^{p}e^{-\frac{p\alpha|z|^{2}}{2}}dA(z)<\infty.$$
In other words, $f\in L^{p}(\mathbb{C},d\mu_{\alpha})$ if and only if $f(z)e^{-\frac{\alpha|z|^{2}}{2}}\in L^{p}(\mathbb{C},dA).$

 The Segal--Bargmann space  also known as the Fock space, denoted by $F_{\alpha}^{2},$ consists of all entire functions $f$ in $L^{2}(\mathbb{C},d\mu_{\alpha}).$

 Although the Fock space can be defined on $\mathbb{C}^{n},$ $n>1,$ the main work of this paper is developed in the context of one complex variable.  The appropriate material for the Fock space of finitely many complex variables reader may find in \cite{foland}, \cite{hall} and \cite{janson}.

 Further, we extend the earlier introduced notion of Fock space for $1\leq p<\infty.$
 Namely, for $1\leq p<\infty$ by $F_{\alpha}^{p}$ we denote the Fock space which consists of entire functions in $L^{p}(\mathbb{C},d\mu_{\alpha}).$
For any $p\geq 1$ the Fock space $F_{\alpha}^{p}$ is closed in $L^{p}(\mathbb{C},d\mu_{\alpha}),$ and from the same reason $F_{\alpha}^{p}$ is a Banach space.

 We should mention that the space $F_{\alpha}^{\infty}$ is defined to be the space of all entire functions $f$ such that
 $$\|f\|_{\infty,\alpha}=\sup\{|f(z)|e^{-\frac{\alpha|z|^{2}}{2}}:z\in\mathbb{C}\}<\infty.$$

Since  $L^{2}(\mathbb{C},d\mu_{\alpha})$ is the Hilbert space with the inner product $$\left<f,g\right>=\int_{\mathbb{C}}f(z)\overline{g(z)}d\mu_{\alpha}(z),$$
the Fock space $F_{\alpha}^{2}$ as its closed subspace determines a natural orthogonal projection $P_{\alpha}: L^{2}(\mathbb{C},d\mu_{\alpha}) \rightarrow F_{\alpha}^{2}.$

It can be shown (see \cite{zhu}) that $P_{\alpha}$ is an integral operator induced with the reproducing kernel $$K_{\alpha}(z,w)=e^{\alpha z\bar{w}}.$$
More precisely, $$P_{\alpha}f(z)=\int_{\mathbb{C}}K_{\alpha}(z,w)f(w)d\mu_{\alpha}(w), f\in L^{2}(\mathbb{C},d\mu_{\alpha}),$$ and specially
\begin{equation}
\label{hilbert}f(z)=\int_{\mathbb{C}}K_{\alpha}(z,w)f(w)d\mu_{\alpha}(w), f\in F_{\alpha}^{2}.\end{equation}
For the problem related to the question of boundedness for the operator $P_{\alpha}$ on $F_{\alpha}^{p}$ we refer to \cite{domi1}.

Normalized reproducing kernel at the point $z$ is given by the sequel formula $$k_{z}(w)=K_{\alpha}(w,z)K_{\alpha}^{-\frac{1}{2}}(z,z)=e^{\alpha w\bar{z}-\alpha|z|^{2}}.$$
 The reproducing property of the kernel $K_{\alpha}(z,w)$ implies that $k_{z}(w)$ is the unit vector in $F_{\alpha}^{2},$ and surprisingly this property of $k_{z}(w)$ stays valid in any Fock space $F_{\alpha}^{p},$ $1\leq p\leq \infty.$

At this point we should underline that for every entire harmonic function $f,$ the function $\varphi(|f|),$ where $\varphi$ is some real convex function,  is subharmonic in any region of the complex plane. This fact will play a significant role in proving the main results of this paper.

A property that  characterizes  the  subharmonic  mappings $u$  is the  sub-meanvalue property which states that if $u$ is a subharmonic function defined on a domain $\Omega,$ then for every closed disk $\overline{D}(z_{0},r)\subset \Omega$ we have the following inequality
$$u(z_{0})\leq \frac{1}{2\pi r}\int_{|z-z_{0}|=r}u(z)|dz|.$$

\section{ Statement of the main results}
\subsection{The optimal rate growth for the functions in $F_{\alpha}^{p}$}

According to the Theorem 2.7. from \cite{zhu} for any $1\leq p\leq\infty$ and $z\in\mathbb{C}$ the following inequality is satisfied
\begin{equation}\label{zhuhe}|f(z)|\leq e^{\frac{\alpha|z|^{2}}{2}}\|f\|_{p,\alpha}, f\in F_{\alpha}^{p}.\end{equation}
Moreover, equality is attained for the functions of the form $$f(w)=e^{\alpha w\bar{z}-\frac{\alpha|z|^{2}}{2}+i\theta},$$ where $\theta$ is a real number.

 The main goal of this paper  is to determine the optimal rate growth of the arbitrary derivative for the functions in Fock spaces $F_{\alpha}^{p},$ $1\leq p<\infty.$

 Let us mention that a similar problem was treated in a different framework such as analytic Hardy space, harmonic Bergman space in the unit disk, et cetera (see for instance \cite{Axler},\cite{kalaj},\cite{kalaj1} and \cite{macinture}). We also refer to the paper \cite{carlen} for some multi-dimensional approach.

Our first main result reads as follows.

\begin{theorem}\label{main}
Let $1\leq p<\infty$ and $f\in F_{\alpha}^{p}.$ For $n\in \mathbf{N}$  let $$T_n(f,z)(w)=\sum_{k=0}^{n-1} \frac{f^{k}(z)}{k!} (w-z)^k$$ and
$$c_n(|z|)=\frac{ (\alpha p/2)^{n/2} n!}{ \Gamma^{1/p}\left(1+\frac{n p}{2}\right)}e^{\frac{\alpha |z|^2}{2}}.$$ Then the following sharp pointwise estimate holds

$$|f^{(n)}(z)|\le c_n(|z|) \|f-T_n(f,z)\|_{p,\alpha}.$$ The equality is attained in a certain $z$ for the mapping $$f(w) = e^{\alpha (z-w)\bar z} (w-z)^n.$$

In particular for $n=1$ we have the sharp estimate

\begin{equation}\label{dera}|f'(z)|\le \frac{ (\alpha p/2)^{1/2} }{ \Gamma^{1/p}\left(1+\frac{ p}{2}\right)}e^{\frac{\alpha |z|^2}{2}}\|f(\cdot)-f(z)\|_{p,\alpha}.\end{equation}

\end{theorem}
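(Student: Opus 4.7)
The plan is to reduce the inequality to a weighted sub-mean-value estimate for an auxiliary entire function. Since $h:=f-T_n(f,z)$ is entire and vanishes to order $n$ at $w=z$, the quotient $g(w):=h(w)/(w-z)^n$ extends to an entire function with $g(z)=f^{(n)}(z)/n!$; so the theorem is equivalent to a pointwise bound on $|g(z)|$ in terms of $\|(\,\cdot\,-z)^n g\|_{p,\alpha}$. A naive attempt to apply the sub-mean-value property to the subharmonic function $|g|^p$ fails because, after the translation $u=w-z$, the Fock weight $e^{-p\alpha|w|^{2}/2}$ picks up a non-radial cross-term $e^{-p\alpha\re(u\bar z)}$ that cannot be handled by averaging over circles centered at $0$.

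The crux of the proof, and the step I expect to require the most care, is absorbing that cross-term into an auxiliary entire function. Since $\bar z$ is a constant, the map $u\mapsto e^{-\alpha u\bar z}$ is itself entire; hence
$$G(u)\;:=\;g(z+u)\,e^{-\alpha u\bar z}$$
is entire in $u$, satisfies $G(0)=g(z)$, and has $|G(u)|^{p}=|g(z+u)|^{p}\,e^{-p\alpha\re(u\bar z)}$. In other words, $|G|^{p}$ is subharmonic on all of $\C$ and swallows precisely the non-radial factor that blocked the direct approach. One subtlety worth a sentence is subharmonicity of $|G|^p$ across zeros of $G$, which is standard via $\log|G|$-subharmonicity.

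From there the argument becomes a routine computation. I would apply the circular sub-mean-value inequality $|G(0)|^{p}\le\frac{1}{2\pi}\int_{0}^{2\pi}|G(re^{i\theta})|^{p}\,d\theta$ valid for every $r>0$, multiply by the radial density $r^{np+1}e^{-p\alpha r^{2}/2}$, and integrate over $r\in(0,\infty)$. Un-doing the substitution $u=w-z$ rewrites the right-hand side as $\frac{1}{p\alpha}\,e^{p\alpha|z|^{2}/2}\,\|h\|_{p,\alpha}^{p}$, while the left-hand side is $|g(z)|^{p}$ times a Gamma integral evaluating to $\frac{1}{p\alpha}(2/(p\alpha))^{np/2}\Gamma(1+np/2)$. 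Rearranging, multiplying through by $n!$, and taking $p$-th roots produces exactly the constant $c_n(|z|)$ claimed in the statement.

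Sharpness comes by tracking the equality case. Equality in the sub-mean-value inequality on every circle forces $|G|^{p}$ to be harmonic on $\C$, and for an entire $G$ this means $G\equiv\const$. Back-solving yields $g(w)=C\,e^{\alpha(w-z)\bar z}$ and therefore an extremal of the form $(w-z)^{n}$ times an exponential linear in $w$, matching the function exhibited in the statement; a direct substitution then verifies that this candidate saturates the inequality. This completes the plan.
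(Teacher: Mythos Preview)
Your proposal is correct and follows essentially the same approach as the paper. The paper also translates by $z$, absorbs the non-radial cross-term via the entire factor $e^{-\alpha u\bar z}$, and then applies the sub-mean-value property over circles to the resulting $|\cdot|^{p}$; the only cosmetic difference is that the paper writes out an explicit remainder $\varphi$ in the decomposition $f-T_n(f,z)=\frac{f^{(n)}(z)}{n!}(w-z)^n e^{\alpha(w-z)\bar z}+(w-z)^{n+1}\varphi(w-z)$, whereas you work directly with $g=h/(w-z)^n$ and $G(u)=g(z+u)e^{-\alpha u\bar z}$, which amounts to the same computation.
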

\begin{remark}
Let us notice that Theorem \ref{main} in a special case when $n=0$ gives the main result of \eqref{zhuhe} (Theorem 2.7. from \cite{zhu}). Moreover, combining \eqref{zhuhe} and \eqref{dera} we get the following estimate  \begin{equation}\label{dera1}|f'(z)|\le \left(\frac{ (\alpha p/2)^{1/2} }{ \Gamma^{1/p}\left(1+\frac{ p}{2}\right)}\right)\left(e^{\frac{\alpha |z|^2}{2}}+e^{{\alpha |z|^2}}\right)\|f\|_{p,\alpha}.\end{equation}
It would be of interest to find the best estimates in \eqref{dera1}  for $p\neq 2$. We will do it for $p=2$ in the following.
\end{remark}
The specificity of Hilbert space technique in $F_{\alpha}^{2}$ and an uniform expansion for the functions $f\in F_{\alpha}^{2}$ about the point $z=0$ gives our second main result.

\begin{theorem}
\label{hilbert11}
Let $f\in F_{\alpha}^{2}.$  Then we have the following sharp inequality
$$|f^{(n)}(z)|\leq \sqrt{\alpha^n\Gamma(1 + n) {}_1F_{1}(1 + n; 1; \alpha|z|^2)}\|f-T_{n}(f,0)\|_{2,\alpha},$$
where $n\in \mathbf{N}$ and $T_{n}(f,0)=\sum_{k=0}^{n-1}\frac{f^{k}(0)}{k!}w^{k}.$

The extremal functions are of the form
$$f(w)=\alpha^{n}e^{\alpha \bar{z} w}w^{n}.$$
Therefore
$$|f^{n}(z)|\leq \sqrt{\alpha^n\Gamma(1 + n) {}_1F_{1}(1 + n; 1; \alpha|z|^2)}\inf_{P\in {\mathcal P}_{n}}\|f-P\|_{2,\alpha},$$
where ${\mathcal P}_{n}$ is the set of all (analytic) polynomials of degree at most $n-1.$

In particular for $n=1$ we get the following best estimates
\begin{equation}\label{est2}
\begin{split}|f'(z)|&\le \sqrt{\alpha \left(1+\alpha |z|^2\right)}e^{\alpha  |z|^2/2} \|f(\cdot)-f(0)\|_{2,\alpha}\\&\le \sqrt{\alpha  \left(1+\alpha |z|^2\right)}e^{\alpha  |z|^2/2} \|f\|_{2,\alpha}.\end{split}
\end{equation}

\end{theorem}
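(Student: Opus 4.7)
The idea is to exploit the Hilbert space structure of $F_\alpha^2$ via the orthonormal monomial basis $e_k(w)=\sqrt{\alpha^k/k!}\,w^k$ ($k\ge 0$); a one-line Gaussian calculation gives $\int_{\mathbb{C}}|w|^{2k}\,d\mu_\alpha(w)=k!/\alpha^k$, so these vectors are orthonormal in $F_\alpha^2$. Writing $f(w)=\sum_{k=0}^{\infty}a_k w^k$, Parseval reads $\|f\|_{2,\alpha}^{2}=\sum_{k\ge 0}|a_k|^{2}\,k!/\alpha^{k}$. Since $T_n(f,0)$ is precisely the orthogonal projection of $f$ onto $\mathcal{P}_n=\mathrm{span}\{e_0,\dots,e_{n-1}\}$, it is automatically the best $\mathcal{P}_n$-approximant and
$$\|f-T_n(f,0)\|_{2,\alpha}^{2}=\sum_{k=n}^{\infty}|a_k|^{2}\,\frac{k!}{\alpha^{k}}.$$

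The main step is to differentiate term by term,
$$f^{(n)}(z)=\sum_{k=n}^{\infty}a_k\,\frac{k!}{(k-n)!}\,z^{k-n},$$
and split each summand as $\bigl(a_k\sqrt{k!/\alpha^k}\bigr)\bigl(\sqrt{\alpha^k/k!}\,\tfrac{k!}{(k-n)!}\,z^{k-n}\bigr)$. The Cauchy--Schwarz inequality then gives
$$|f^{(n)}(z)|^{2}\le \|f-T_n(f,0)\|_{2,\alpha}^{2}\,\sum_{k=n}^{\infty}\frac{\alpha^{k}}{k!}\Bigl(\frac{k!}{(k-n)!}\Bigr)^{\!2}|z|^{2(k-n)}.$$
After the shift $j=k-n$ the second factor becomes $\alpha^{n}\sum_{j\ge 0}\frac{(n+j)!}{(j!)^{2}}(\alpha|z|^{2})^{j}$, and since $(1+n)_j/(1)_j=(n+j)!/(n!\,j!)$, this sum is exactly $\alpha^{n}\,\Gamma(1+n)\,{}_1F_1(1+n;1;\alpha|z|^{2})$. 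Taking square roots yields the announced estimate.

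For sharpness, the equality clause in Cauchy--Schwarz forces $a_k=\lambda\,\alpha^{k}\bar z^{k-n}/(k-n)!$ for $k\ge n$ and $a_k=0$ otherwise; summing the corresponding series recovers precisely $f(w)=\lambda\,\alpha^{n}w^{n}e^{\alpha\bar z w}$, the proposed extremizer. The particular case $n=1$ follows from the closed form ${}_1F_1(2;1;x)=(1+x)e^{x}$, a one-line consequence of differentiating the exponential series, while the second inequality in \eqref{est2} is just the Pythagorean bound $\|f-f(0)\|_{2,\alpha}\le\|f\|_{2,\alpha}$ coming from the orthogonal decomposition $f=f(0)+(f-f(0))$. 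I do not anticipate any real obstacle here; the only step calling for a little care is the identification of the hypergeometric series, which amounts to pure bookkeeping with Pochhammer symbols.
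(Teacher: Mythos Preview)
Your argument is correct and follows essentially the same route as the paper: expand $f$ in the monomial basis, use the Parseval identity $\|f\|_{2,\alpha}^{2}=\sum_k |a_k|^2 k!/\alpha^k$, apply Cauchy--Schwarz to the termwise-differentiated series, identify the resulting sum as $\alpha^n\Gamma(1+n)\,{}_1F_1(1+n;1;\alpha|z|^2)$, and read off the extremals from the equality case. The only cosmetic difference is that the paper first works out $n=1$ explicitly before passing to general $n$, while you handle general $n$ directly and specialize at the end; mathematically the two proofs are the same.
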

One should notice that in  Theorem \ref{main} and  Theorem \ref{hilbert11} the obtained sharp estimates do not coincide (when $p=2$). This at first sight  paradoxical situation can be explained by the fact that the Taylor expansions $T_{n}(f,z)$ and $T_{n}(f,0)$ that appear in Theorem \ref{main} and Theorem \ref{hilbert11} (respectively) differ in general for $z\neq 0.$ However, in case when $z=0$ the obtained estimates match each other.

The following example shows that Theorem~\ref{main} is not true if instead of $\|f-T_n(f,z)\|_{p,\alpha}$ we put $\|f\|_{p,\alpha}$.
\begin{example}
In the following calculations we consider the parameter $s$ to be real and $z\in \mathbb{C},$ $\Re{(z)}\neq 0.$
Consider
\begin{equation*}
\begin{split}
I(s) &=\int_{\mathbb{C}}|(w-z)^{n} e^{\alpha (w-z)\overline{z}}+s|^p e^{-|w|^2 \alpha p/2}dA(w), \mathrm{ i.e. }\\
I(s)&=\int_{\mathbb{C}}|w^{n} e^{\alpha w\overline{z}}+s|^p e^{-|z+w|^2 \alpha p/2}dA(w).
\end{split}
\end{equation*}
Differentiating the last equation with respect to $s$ we get
$$I'(s) = p \int_{\mathbb{C}}(\Re(w^{n} e^{\alpha w\overline{z}})+s)  |w^{n} e^{\alpha w\overline{z}}+s|^{p-2} e^{-|z+w|^2 \alpha p/2}dA(w),$$
and for $s=0,$

$$I'(0) = p \int_{\mathbb{C}}\Re(w^{n} e^{\alpha w\overline{z}})  |w^{n} e^{\alpha w\overline{z}}|^{p-2} e^{-|z+w|^2 \alpha p/2}dA(w).$$
On the other hand, taking into account that $z=|z|e^{i\varphi}$ and $w=re^{it}$ we have
\begin{equation}
\begin{split}
I'(0)&= p \int_{\mathbb{C}} |w|^{n(p-2)}\Re\left(w^{n} \frac{e^{\alpha w\overline{z}}}{|e^{\alpha w\bar{z}}|^{2}}\right)  e^{-(|z|^2+|w|^2) \alpha p/2}dA(w)\\
 &=p \int_{\mathbb{C}} |w|^{n(p-2)}\Re\left(w^{n} e^{-\alpha z\overline{w}}\right) e^{-(|z|^2+|w|^2) \alpha p/2}dA(w)\\
&=pe^{-\frac{p\alpha|z|^{2}}{2}} \int_{0}^{\infty}e^{-\frac{p\alpha r^{2}}{2}}r^{n(p-2)+1}\Re\left(\int_{0}^{2\pi}r^{n}e^{int} e^{-\alpha r|z|e^{i(\varphi-t)}}dt\right)dr\\
&=pe^{-\frac{p\alpha|z|^{2}}{2}} \int_{0}^{\infty}e^{-\frac{p\alpha r^{2}}{2}}r^{n(p-1)+1}\Re\left(\frac{e^{in\varphi}}{i}\int_{|\xi|=1}\xi^{n-1} e^{-\alpha r|z|\xi^{-1}}d\xi\right) dr\\
&=\frac{(-1)^{n}2p\pi\alpha^{n}e^{-\frac{p\alpha|z|^{2}}{2}}\Re{z^{n}}}{n!}  \int_{0}^{\infty}e^{-\frac{p\alpha r^{2}}{2}}r^{np+1} dr\\
&=\frac{(-1)^{n}2\pi\alpha^{n-1} e^{-\frac{p\alpha|z|^{2}}{2}} (\alpha p/2)^{-np/2}}{n!} \Gamma\left(1+\frac{np}{2}\right)\Re{z^{n}}\neq 0.
\end{split}
\end{equation}
Assume that $n=1$.
From this we conclude that $I(0)$ is not minimum of $I(s)$, and therefore for $f(w)=(w-z)^{n} e^{\alpha (w-z)\overline{z}}+s$,  there is a small  enough $s$ so that $\|f\|_{p,\alpha}<\|f(\cdot)-f(z)\|_{p,\alpha}$, but we know that $$|f'(z)|=\frac{ (\alpha p/2)^{1/2} }{ \Gamma^{1/p}\left(1+\frac{ p}{2}\right)}e^{\frac{\alpha |z|^2}{2}}\|f(\cdot)-f(z)\|_{p,\alpha},$$ and so $$|f'(z)|>\frac{ (\alpha p/2)^{1/2} }{ \Gamma^{1/p}\left(1+\frac{ p}{2}\right)}e^{\frac{\alpha |z|^2}{2}}\|f(\cdot)\|_{p,\alpha}.$$ A similar analysis works for general $n$.
\end{example}
At the end of this section we present another type of pointwise estimate which differs quantitative from the previously stated results.  More concretely, in Theorem \ref{nulla} we establish the sharp pointwise estimate for the function $f\in F_{\alpha}^{p}$ in the point $z=0$ in terms of $\|f\|_{p,\alpha},$ while in   Corollary \ref{nulla1} we generalize this estimation for arbitrary $z\in\mathbb{C}.$
\begin{theorem}\label{nulla} Let $f\in F_{\alpha}^{p},$ where $1\leq p<\infty.$ Then
$$|f^{(n)}(0)|\leq \frac{(\alpha p)^{\frac{n}{2}}n!}{2^{\frac{n}{2}}\Gamma^{1/p}\left(1+\frac{np}{2}\right)}\|f\|_{p,\alpha}.$$ The extremal functions are of the form $$f(w)=Aw^{n},$$ where $A$ is arbitrary  complex constant.
\end{theorem}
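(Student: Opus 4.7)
The plan is to expand $f(w)=\sum_{k=0}^{\infty}a_k w^k$ so that $f^{(n)}(0)=n!\,a_n$, extract $a_n$ by a Fourier integral on each circle $\{|w|=r\}$, and then integrate the resulting radial inequality against the Gaussian weight; the right-hand side will reassemble into $\|f\|_{p,\alpha}^p$ via polar coordinates. The extremal choice $f(w)=Aw^n$ already signals that the argument should go through Jensen's inequality on the circle, with equality occurring exactly on pure monomials.

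First, for every $r>0$, orthogonality of the characters $\{e^{ik\theta}\}$ (equivalently, Cauchy's formula on $\{|w|=r\}$) gives
$$a_n r^n=\frac{1}{2\pi}\int_0^{2\pi}f(re^{i\theta})\,e^{-in\theta}\,d\theta.$$
Applying the triangle inequality and then Jensen's inequality with the convex function $t\mapsto t^p$ yields the radial bound
$$|a_n|^p r^{np}\le \frac{1}{2\pi}\int_0^{2\pi}|f(re^{i\theta})|^p\,d\theta.$$

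Second, I would multiply both sides by $p\alpha\, r\, e^{-p\alpha r^2/2}$ and integrate in $r$ over $(0,\infty)$. The polar form of $d\mu_\alpha$ turns the right-hand side into $\|f\|_{p,\alpha}^p$ (Fubini applies since the integrand is non-negative), while the substitution $u=p\alpha r^2/2$ reduces the integral on the left to a standard gamma evaluation, yielding
$$|a_n|^p\,\bigl(\tfrac{2}{p\alpha}\bigr)^{np/2}\Gamma\!\left(1+\tfrac{np}{2}\right)\le \|f\|_{p,\alpha}^p.$$
Rearranging and multiplying through by $n!$ gives exactly the sharp constant stated in the theorem.

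For sharpness, I would trace equality back through the two inequalities. Equality in the triangle inequality forces $e^{-in\theta}f(re^{i\theta})$ to have constant argument in $\theta$, while equality in Jensen forces $|f(re^{i\theta})|$ to be independent of $\theta$; together these pin down $f$ as a constant multiple of $w^n$, and a direct check via the formula $\|w^n\|_{p,\alpha}^p=(2/(p\alpha))^{np/2}\Gamma(1+np/2)$ confirms that equality is attained. I do not anticipate a substantive obstacle: the argument is a Fourier-theoretic reading of Cauchy's formula married to polar integration against the Gaussian weight, and the only step meriting even minor care is the Fubini interchange in recognizing the weighted polar integral as $\|f\|_{p,\alpha}^p$.
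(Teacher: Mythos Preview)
Your proof is correct and follows essentially the same route as the paper: both derive the radial inequality $|a_n|^p r^{np}\le \frac{1}{2\pi}\int_0^{2\pi}|f(re^{i\theta})|^p\,d\theta$ and then integrate it against $p\alpha\,r\,e^{-p\alpha r^2/2}\,dr$ to produce $\|f\|_{p,\alpha}^p$ on the right and the Gamma factor on the left. The only difference is cosmetic: the paper packages the circle estimate as the sub-mean-value inequality for an auxiliary harmonic function $F(\xi)$ on the unit disk (with $F(0)=a_n$ and $|F(e^{it})|=|f(re^{it})|/r^n$), whereas you obtain the same inequality directly from the Cauchy coefficient formula combined with Jensen's inequality --- these are two phrasings of the same fact.
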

\begin{remark}
It is easy to conclude that \begin{equation}\label{pp}\|f-f(0)\|_{p,\alpha}\le \|f\|_{p,\alpha},\end{equation} for $p=2$, however the inequality \eqref{pp} does not hold in general if $p\neq 2$.
\end{remark}
\begin{remark}

In \cite{zhu} (exercise 18, pp.90)  the $n$th Taylor coefficient of the function $f\in F_{\alpha}^{p},$ $1\leq p<\infty$  was estimated as follows
  $$|f^{(n)}(0)|\leq C(\alpha,n)\|f\|_{p,\alpha},$$ where $C(\alpha,n)=\left(\frac{\alpha e}{n}\right)^{\frac{n}{2}}n!.$

Let us denote the obtained estimate-constant from Theorem \ref{nulla} by $$C_{p}(\alpha,n)=\frac{(\alpha p)^{\frac{n}{2}}n!}{2^{\frac{n}{2}}\Gamma^{1/p}\left(1+\frac{np}{2}\right)}.$$ Then
$$\frac{C(\alpha,n)}{C_{p}(\alpha,n)}=\frac{(2e)^{n/2}\Gamma^{1/p}\left(1+\frac{np}{2}\right)}{(np)^{n/2}}.$$

 Moreover,  using the asymptotic behaviour of the  Gamma function $\Gamma(1+x)\sim \sqrt{2\pi x}\left(\frac{x}{e}\right)^{x},$ $x\rightarrow+\infty$ we get
  $$\frac{C(\alpha,n)}{C_{p}(\alpha,n)}\sim 1,\enspace \mbox{as}\enspace p\rightarrow +\infty$$
  or $$\frac{C(\alpha,n)}{C_{p}(\alpha,n)}\sim (\pi n p)^{\frac{1}{2p}},\enspace\mbox{as}\enspace n\rightarrow+\infty.$$
  Here, the relation $a_{n}\sim b_{n}, (a_{n},b_{n}>0)$ $n\rightarrow+\infty$ means $\lim_{n\rightarrow+\infty}\frac{a_{n}}{b_{n}}=1.$
\end{remark}
\begin{corollary}
\label{nulla1}
  Let $f\in F_{\alpha}^{p},$ where $1\leq p<\infty$ and $z\in\mathbb{C}.$ Then
$$|f^{(n)}(z)|\leq \frac{(a p)^{\frac{n}{2}}n!}{2^{\frac{n}{2}}\Gamma^{1/p}\left(1+\frac{np}{2}\right)}\|f_{z}\|_{p,\alpha},$$
where $f_{z}(w)=f(z+w).$ The extremal functions are of the form $$f(w)=A(w-z)^{n},$$ where $A$ is some complex constant.
\end{corollary}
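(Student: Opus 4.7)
The plan is to deduce Corollary~\ref{nulla1} directly from Theorem~\ref{nulla} by a translation trick. Given $z\in\mathbb{C}$, set $f_z(w)=f(z+w)$. Since $f$ is entire, so is $f_z$, and differentiating the shifted power series term by term gives the key identity $f_z^{(n)}(0)=f^{(n)}(z)$.

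If $\|f_z\|_{p,\alpha}=\infty$ the asserted inequality is vacuous, so we may assume this norm is finite. In that case $f_z$ is an entire function in $L^p(\mathbb{C},d\mu_\alpha)$, i.e.\ $f_z\in F_\alpha^p$, and Theorem~\ref{nulla} applied to $f_z$ at the origin immediately yields
\begin{equation*}
|f^{(n)}(z)|=|f_z^{(n)}(0)|\le \frac{(\alpha p)^{n/2}\,n!}{2^{n/2}\,\Gamma^{1/p}\!\left(1+\frac{np}{2}\right)}\|f_z\|_{p,\alpha},
\end{equation*}
which is exactly the claim. The equality characterization in Theorem~\ref{nulla} forces $f_z(w)=Aw^n$ for some constant $A$; translating back gives $f(w)=A(w-z)^n$, which is the extremal family stated in the corollary.

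The only step requiring a moment of care is the membership of the translated function in $F_\alpha^p$: translation is not an isometry of the Gaussian-weighted space, so $f\in F_\alpha^p$ does not automatically imply $f_z\in F_\alpha^p$. This is precisely why the right-hand side is expressed through $\|f_z\|_{p,\alpha}$ rather than $\|f\|_{p,\alpha}$, and it is the reason the corollary is genuinely weaker than an estimate with a universal constant in front of $\|f\|_{p,\alpha}$. Apart from this small bookkeeping, no real obstacle is expected: the argument is essentially a one-line reduction to the previously established $z=0$ case.
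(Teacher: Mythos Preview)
Your argument is correct and matches the paper's own proof exactly: the paper simply says ``The proof follows directly by applying Theorem~\ref{nulla} on the function $F(w)=f(z+w)$.'' Your additional remarks about the possibly infinite norm and the non-isometric nature of translation are accurate clarifications, but the core reduction is identical.
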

\begin{proof}
The proof follows directly by applying  Theorem \ref{nulla} on the function $F(w)=f(z+w).$
\end{proof}

Proofs of the results are given in  sections~3,~4 and ~5. In the section~6 we consider the differential operator and its norm between two Fock spaces with different weights. We also consider the shift operator.

\section{Proof of  Theorem \ref{main}}
\begin{proof}
At the beginning we will examine the case when $n=1.$

Let $$\varphi(t)= \frac{f(t+z)-f(z)-f'(z) t e^{\alpha t \bar z}}{t^2}.$$ Then $\varphi$ is an entire analytic function and we have
\begin{equation}\label{prez}f(w) = f(z)+ f'(z)(w-z) e^{\alpha(w\bar z-|z|^2)}+(w-z)^2 \varphi(w-z).\end{equation}

Then,
\begin{equation*}
\begin{split}
&\|f-f(z)\|_{p,\alpha}^{p}\\
&=\frac{p\alpha}{2\pi} \int_{C} |f'(z)(w-z) e^{\alpha(w-z)\bar z}+(w-z)^2 \varphi(w-z)|^p e^{-\alpha p |w|^2/2} dA(w)\\
&=\frac{p\alpha}{2\pi} \int_{C} |f'(z)w e^{\alpha w\bar z}+w^2 \varphi(w)|^p e^{-\frac{\alpha p |w+z|^2}{2}} dA(w)
\end{split}
\end{equation*}
So for $w=re^{it}$, we get

 \[\begin{split}\label{nizovi}&\|f-f(z)\|_{p,\alpha}^p\\
&=\frac{p\alpha}{2\pi} \int_0^\infty \int_0^{2\pi} r|re^{it}f'(z)e^{\alpha re^{it} \bar z}+r^2e^{2it}\varphi(re^{it})|^p e^{-\frac{\alpha p|re^{it}+z|^2}{2}} dt dr\\&= \frac{p\alpha}{2\pi} \int_0^\infty e^{-\frac{p\alpha r^{2}}{2}}r^{p+1}\int_{\mathbf{T}}  \left|f'(z)+e^{-\alpha (r\zeta)
\bar z}r\zeta\varphi(r\zeta )\right|^p|d\zeta| e^{-\frac{p\alpha|z|^2}{2}} dr\\&\ge p\alpha e^{-\frac{p\alpha|z|^{2}}{2}}\int_0^\infty  e^{-\frac{p\alpha r^{2}}{2}}r^{p+1} |f'(z)|^p  dr \\&=2^{p/2} (\alpha p)^{-p/2} \Gamma\left(1+\frac{p}{2}\right)e^{-\frac{p\alpha|z|^2}{2}}|f'(z)|^{p}\\&=|f'(z)|^{p}\|(w-z) e^{\alpha (w-z)\overline{z}}\|^p_{p,\alpha}.\end{split}\]
The next-to-the last inequality was obtained  by using the subharmonicity of the function $$F(\zeta)=\left|f'(z)+e^{-\alpha (r\zeta)
\bar z}r\zeta\varphi(r\zeta )\right|^p$$ in $|\zeta|\leq 1.$

From the last sequence of inequalities we conclude that the equality is going to be attained for the functions $f\in F_{\alpha}^{p}$ for which $\varphi=0$ in the representation \eqref{prez} and that the extremal functions are given by $$f(w)=A(w-z)e^{\alpha(w-z)\bar{z}},$$ where $A$ is some constant.

The proof for $n>1$  is similar to the case $n=1$. We give its brief outline.
For $f\in F_{\alpha}^{p}$ we have the following representation
$$f(w)=\sum_{k=0}^{n-1}\frac{f^{(k)}(z)}{k!}(w-z)^{k}+\frac{f^{(n)}(z)}{n!}(w-z)^{n}e^{\alpha(w-z)\bar z}+(w-z)^{n+1}\varphi(w-z),$$
where $\varphi$ is a certain entire function depending from $f.$

In a similar manner as it was done before, we have the inequality
\begin{equation}
\begin{split}
\|f-T_{n}(f,z)\|_{p,\alpha}^{p}&\geq \left|\frac{f^{n}(z)}{n!}\right|^{p}\|(w-z)^{n}e^{\alpha(w-z)\bar z}\|_{p,\alpha}^{p}\\
&=\frac{p\alpha}{2\pi} \left|\frac{f^{n}(z)}{n!}\right|^{p}\int_{\mathbb{C}}|w|^{pn}e^{\alpha p \Re{(w\bar{z})}}e^{-\frac{p\alpha|z+w|^{2}}{2}}dA(w)\\
&=\left(p\alpha\int_{0}^{\infty}r^{pn+1}e^{-\frac{p\alpha r^{2}}{2}}dr\right)e^{-\frac{p\alpha|z|^{2}}{2}}\left|\frac{f^{n}(z)}{n!}\right|^{p}\\
&=2^{\frac{np}{2}}(\alpha p)^{-\frac{np}{2}}\Gamma\left(1+\frac{np}{2}\right)e^{-\frac{p\alpha|z|^{2}}{2}}\left|\frac{f^{n}(z)}{n!}\right|^{p},
\end{split}
\end{equation} where the equality is attained again if $\varphi=0.$

\end{proof}

\section{Proof of Theorem \ref{hilbert11}}

\begin{proof}
Firstly  we will present the proof in case when $n=1.$

Using the  uniform expansion of the function $f\in F_{\alpha}^{2},$
$$f(w)=\sum_{k=0}^{\infty}a_{k}w^{k},$$ and by using the polar coordinates we easily get
\begin{equation}
\begin{split}
\|f\|_{2,\alpha}^{2}&=\frac{\alpha}{\pi}\int_{\mathbb{C}}\left|\sum_{k=0}^{\infty}a_{k}w^{k}\right|^{2}e^{-\alpha|w|^{2}}dA(w)\\
&=\frac{\alpha}{\pi}\sum_{k=0}^{\infty}|a_{k}|^{2}\int_{0}^{2\pi}dt\int_{0}^{\infty}e^{-\alpha r^{2}}r^{2k+1}dr\\
&=\sum_{k=0}^{\infty}|a_{k}|^{2}\frac{ k!}{\alpha^{k}}.
\end{split}
\end{equation}

Since $f'(w) = \sum_{n=1}^\infty n a_n w^{n-1},$ the Cauchy-Schwarz inequality implies

$$\left| \sum_{n=0}^\infty (n+1) a_{n+1} w^{n}\right|^2\leq\left( \sum_{n=0}^\infty \frac{(n+1)! |a_{n+1}|^2 }{\alpha^{n+1}}\right)\left(\sum_{n=0}^\infty \frac{(n+1)\alpha^{n+1} |w|^{2n}}{n!}\right),$$

with an equality for certain $w=z$ if and only if $$a_{n+1}=\frac{\alpha^{n+1} \bar{z}^n}{n!}.$$

 Then $$f(w) = \sum_{n=0}^\infty a_n w^n=a_0 +\alpha e^{\alpha w \bar{z}} w.$$

In case we have an optimal condition $f(0)=0,$ then $a_0 = -\alpha ze^{\alpha |z|^2}. $

Thus $$f'(z)=\alpha e^{\alpha  |z|^{2}} (1 + \alpha  |z|^{2}).$$

Further

$$\sum_{n=0}^\infty \frac{(n+1)\alpha^{n+1} |z|^{2n}}{n!}=\alpha e^{\alpha |z|^2} \left(1+\alpha |z|^2\right)$$

and

$$ \sum_{n=0}^\infty \frac{(n+1)! |a_{n+1}|^2 }{\alpha^{n+1}}=\alpha e^{\alpha |z|^2} \left(1+\alpha |z|^2\right).$$
If  we consider the case when $n>1,$ the $n$th derivative is then given by the following formula
$$f^{(n)}(w) = \sum_{k=n}^\infty \frac{\Gamma(k+1)}{\Gamma(k-n+1)} a_k w^{k-n}.$$
Repeating the previous approach conducted for the case $n=1,$ we may conclude that the coefficients of the extremal function have the following form $$a_{k+n}=\frac{\alpha^{k+n} \bar{z}^k}{\Gamma(k+1)}.$$
Thus
\begin{equation}
\begin{split}
f(w) &= \sum_{k=0}^\infty a_k w^k=\sum_{k=0}^{n-1}a_{k}w^{k} + \sum_{k=0}^{\infty}\frac{\alpha^{n+k}\bar{z}^{k}}{\Gamma(k+1)}w^{k+n}\\
&=\sum_{k=0}^{n-1}a_{k}w^{k}+\alpha^{n}w^{n}e^{\alpha w\bar{z}}.
\end{split}
\end{equation}

The $n$th derivative is then given by
 $$f^{(n)}(w)= \sum_{k=0}^{\infty}\frac{\alpha^{n+k}\bar{z}^{k}\Gamma(k+n+1)}{\Gamma^{2}(k+1)}w^{k},$$
and
$$f^{(n)}(z)=\sum_{k=0}^{\infty}\frac{\alpha^{n+k}\Gamma(k+n+1)}{\Gamma^{2}(k+1)}|z|^{2k}.$$

Therefore,
\begin{equation}
\begin{split}
\sum_{n=0}^\infty \frac{(n+k)! |a_{n+k}|^2}{\alpha^{k+n}}&=\sum_{k=0}^{\infty}\frac{\alpha^{n+k}\Gamma(k+n+1)}{\Gamma^{2}(k+1)}|z|^{2k}\\
&=\alpha^n\Gamma(1 + n) {}_1F_{1}(1 + n; 1; \alpha|z|^2).\end{split}\end{equation}

Let us summarize the obtained results in the next inequality

$$|f^{n}(z)|\leq \sqrt{\alpha^n\Gamma(1 + n) {}_1F_{1}(1 + n; 1; \alpha|z|^2)}\inf_{{\mathcal P}_{n}}\|f-p\|_{2,\alpha},$$
where ${\mathcal P}_{n}$ is the set of all (analytic) polynomials of degree at most $n-1.$

On the other hand, $$\inf_{{\mathcal P}_{n}}\|f-p\|_{2,\alpha}=\|f-T_{n}(f,\cdot)\|_{2,\alpha},$$ where
$T_{n}(f,w)=\sum_{k=0}^{n-1}\frac{f^{k}(0)}{k!}w^{k}$ is a Taylor expansion of the function $f$ about point $w=0$ up to  order $n-1.$
\end{proof}

\section{Proof of Theorem \ref{nulla}}

\begin{proof}

For $f\in F_{\alpha}^{p}$ by using the Taylor expansion of the function $f(z)$ in $z=0,$ it is clear that the function $f$ can be presented as follows
\begin{equation}\label{prezen}f(z)=\sum_{k=0}^{n-1}\frac{f^{(k)}(0)}{k!}z^{k}+\frac{f^{(n)}(0)}{n!}z^{n}+z^{n+1}\varphi(z),\end{equation}
 where $\varphi$ is certain entire function. Let us first suppose that $\frac{f^{(n)}(0)}{n!}=1.$

 Keeping in mind the identity \eqref{prezen} we have
\begin{equation}
\begin{split}
\label{niz}
&\|f\|^p_{p,\alpha}\\
& =\frac{p\alpha}{2\pi} \int_{\mathbf{C}}|f(z)|^p e^{-\alpha p |z|^2/2}dA(z)\\
&= \frac{p\alpha}{2\pi}\int_{\mathbb{C}}  \left|\sum_{k=0}^{n-1}\frac{f^{(k)}(0)}{k!}z^{k}+\frac{f^{(n)}(0)}{n!}z^{n}+z^{n+1}\varphi(z)\right|^{p} e^{-\alpha p |z|^2/2}dA(z)\\
&= \frac{p\alpha}{2\pi}\int_{0}^{\infty}e^{-\frac{\alpha r^{2}}{2}}r^{np+1}\int_{0}^{2\pi}\left|1+\sum_{k=0}^{n-1}\frac{f^{(k)}(0)}{r^{n-k}k!}e^{i(k-n)t}+re^{it}\varphi(re^{it})\right|^{p}dt dr\\
&=\frac{p\alpha}{2\pi}\int_{0}^{\infty}e^{-\frac{\alpha r^{2}}{2}}r^{np+1}\int_{|\xi|=1}\left|1+\sum_{k=0}^{n-1}\frac{f^{(k)}(0)}{r^{n-k}k!}\bar{\xi}^{n-k}+r\xi\varphi(r\xi)\right|^{p}|d\xi|dr\\
&\ge p\alpha \int_{0}^{\infty}e^{-\frac{\alpha r^{2}}{2}}r^{np+1}|1|^{p}dr dt=\|z^{n}\|^p_{p,\alpha}.
\end{split}
\end{equation}

Clearly, in the last inequality we used the subharmonicity of the function $$F(\xi)=\left|1+\sum_{k=0}^{n-1}\frac{f^{(k)}(0)}{r^{n-k}k!}\bar{\xi}^{n-k}+r\xi\varphi(r\xi)\right|^{p}$$ in the unit disc $|\xi|\leq 1,$ and the inequality  $2\pi F(0)\leq\int_{|\xi|=1} F(\xi)|d\xi|.$

Let $$C_p=\|z^{n}\|_{p,\alpha}=\left(\frac{p\alpha}{2\pi}\int_{\mathbf{C}}|z|^p e^{-p\alpha |z|^2/2}dA(z)\right)^{1/p}.$$ Then $$C_p=\left(2^{\frac{np}{2}}  (\alpha p)^{-\frac{np}{2}} \Gamma\left(\frac{np}{2}+1\right)\right)^{\frac{1}{p}}.$$

Further, if $\left|\frac{f^{n}(0)}{n!}\right|=R$, then $\left|\frac{g^{(n)}(0)}{n!}\right|=1$, where $g(z) = \frac{f(z)}{R}$. Further, $\|g\|_{p,\alpha}\ge \|z^{n}\|_{p,\alpha}$, and therefore $$ \|f\|_{p,\alpha}\ge R \|z\|_{p,\alpha}=\left|\frac{f^{(n)}(0)}{n!}\right|\|z^{n}\|_{p,\alpha}.$$ The result follows.

\end{proof}

\section{Some estimates for the differential operator and for the shift operator}

\subsection{The differential operator}
The following example
$$f(z) = \sum_{k=2}^\infty \frac{1}{\sqrt{k(k-1)}} \, \frac{z^k}{\sqrt{k!}}, $$
shows that there are examples of  $F^2_1,$ so that $f'\notin F^2_1.$ However, if $\beta>\alpha$, then $f\in F^2_\alpha$ implies $f'\in F^2_\beta$. Namely
if $f(z)=\sum_{k=0}^\infty a_k z^k$, then $$\|f\|^2_{2,\alpha}=\sum_{k=0}^\infty \frac{|a_k|^2 k!}{\alpha^k}\text{     and     }
\|f'\|^2_{2,\beta}=\sum_{k=1}^\infty \frac{|k a_k|^2 (k-1)!}{\beta^k}.$$
Now the inequality $$ \frac{|k a_k|^2 (k-1)!}{\beta^k}\le C^2 \frac{|a_k|^2 k!}{\alpha^k}$$ is equivalent with $ k\le C^2 (\beta/\alpha)^{k}$.

For $h(x) = \gamma^{-x} x$, we have $$C^2=\max\{h(n): n\in\mathbf{N}\}=\max\{h(m), h(1) \}$$ where $\gamma=\frac{\beta}{\alpha}$ and
$m=\left[\frac{1}{e\log \gamma}\right]$, where $\left[\cdot\right]$ stands for the integer part.

Now we have the following inequality $$ k\le C^2(\beta/\alpha)^{k}$$ for every $k$ and therefore \begin{equation}\label{seek}\|f'\|_{2,\beta}\le C  \|f\|_{2,\alpha}.\end{equation}

The extremal function is $$f(z) = \left\{
                                    \begin{array}{ll}
                                      z,  \hbox{if $h(m)<h(1)$;} \\
                                      z^m, & \hbox{otherwise.}
                                    \end{array}
                                  \right.$$

More general for every positive number $(p\ge 1)$  if $\beta>\alpha$, then $f\in F^p_\alpha$ implies $f'\in F^p_\beta$.  By using the Cauchy formula $$f'(z) = \frac{1}{2\pi i}\int_{|w|=1}\frac{f(w+z)}{w^2} dw$$ and therefore by using the Fubini's theorem and Jensen's inequality we get

\[\begin{split}\int_{\mathbf{C}} |f'(z)|^p e^{-\beta p|z|^2/2} dxdy &=\int_{\mathbf{C}} \left|\frac{1}{2\pi i}\int_{|w|=1}\frac{f(w+z)}{w^2} dw\right|^p e^{-\beta p|z|^2/2} dxdy\\ &\le \frac{1}{2\pi }\int_{\mathbf{C}} \int_{|w|=1}|f(w+z)|^p |dw| e^{-\beta p|z|^2/2} dxdy
\\ &= \frac{1}{2\pi }\int_{|w|=1}\left(\int_{\mathbf{C}} |f(w+z)|^p  e^{-\beta p|z|^2/2} dxdy\right) |dw|
\end{split}
\]
It remains to check the following strighforward inequality $$ e^{-\beta p|z|^2/2}\le c^p e^{-\alpha p|z+w|^2/2},$$ with $$c=e^{\frac{\alpha \beta }{2 \beta-2 \alpha}}$$ for $z\in\mathbf{C}$ and $|w|=1$. Moreover $$\int_{\mathbf{C}} |f(w+z)|^p  e^{-\alpha p|z+w|^2/2} dxdy=\int_{\mathbf{C}} |f(z)|^p  e^{-\alpha p|z|^2/2} dxdy.$$
Therefore $$\|f'\|_{p,\beta}\le e^{\frac{\alpha \beta }{2 \beta-2 \alpha}}\|f\|_{p,\alpha}.$$
Thus,  if $\beta>\alpha$, then $f\in F^p_\alpha$ implies $f'\in F^p_\beta$.

In a similar way we can prove that,  if $\beta>\alpha$, then $f\in F^p_\alpha$ implies $f^{(n)}\in F^p_\beta$ for every integer $n$.
\begin{remark}
For $p\ge 1$, we proved that the differential operator $$D=D_{\alpha,\beta}: F_\alpha^p\to F_\beta^p,$$ defined by $D[f]=f'$ is bounded, provided that $\beta>\alpha$. For $p=2$ we proved that $$\|D\|_2^2 = \max\left\{\gamma^{-1}, \gamma^{-\left[\frac{1}{e\log \gamma}\right]} \left[\frac{1}{e\log \gamma }\right]\right\}, \ \ \ \gamma=\frac{\beta}{\alpha}.$$ It would be of interest to find its norm for general $p\neq 2$.
\end{remark}
In the following we determine the adjoint operator
$$D^*:F^2_\beta \longrightarrow F^2_\alpha,$$
where $\beta >\alpha >0.$

 For this purpose we denote by $(.,.)_\alpha $ the inner product in $F^2_\alpha $ and by $(.,.)_\beta $ the inner product in $F^2_\beta. $
Let $(e_k)_{k \ge 0}$ denote the orthonormal basis $(e_k = \frac{z^k}{c_k})_{k \ge 0} $ of $F^2_\alpha,$ where
$$c_k^2 = \| z^k \|_{2,\alpha} ^2 = \frac{k!}{\alpha^k},$$
and let $(E_k)_{k \ge 0}$ denote the orthonormal basis $(E_k = \frac{z^k}{d_k})_{k \ge 0}  $ of $F^2_\beta,$ where
$$d_k^2 = \| z^k \|_{2,\beta} ^2 = \frac{k!}{\beta^k}.$$
We write $f\in F^2_\alpha$ in the form $f= \sum_{k=0}^\infty f_k e_k,$ where $(f_k)_k \in l^2,$ and
$g\in F^2_\beta$ in the form $g= \sum_{k=0}^\infty g_k E_k,$ where $(g_k)_k \in l^2.$ Then we have
\begin{eqnarray*}
(Df,g)_\beta &=& (\sum_{k\ge 1} kf_k \frac{z^{k-1}}{c_k}, g)_\beta \\
&=& (\sum_{k\ge 0} (k+1)f_{k+1} \frac{d_k}{c_{k+1}} E_k, \sum_{k\ge 0} g_k E_k)_\beta \\
&=& \sum_{k\ge 0} (k+1)f_{k+1} \frac{d_k}{c_{k+1}} \overline g_k\\
&=& \sum_{k\ge 1} f_{k} k\frac{d_{k-1}}{c_{k}} \overline g_{k-1}\\
&=& (f,D^*g)_\alpha,
\end{eqnarray*}
which implies that
$$D^*g = \sum_{k\ge 1}  k\frac{d_{k-1}}{c_{k}}  g_{k-1} e_k.$$
Notice that
$$zg(z)= \sum_{k\ge 1} \frac{c_k}{d_{k-1}} g_{k-1}e_k.$$
\begin{remark} The adjoint operator $D^*$ is not of the form $zg(z),$ as it is for the unbounded densely defined operator $D : F^2_1 \longrightarrow F^2_1,$ see \cite{foland}, \cite{has}.
In this caes the operator $D$ appears as the annihilation operator and its adjoint as the creation operator in quantum mechanics.
\end{remark}
\vskip 0.5 cm

\subsection{The shift operator}

The shift (or multiplication) operator $M,$ defined by $Mf(z)=zf(z),$ is bounded as an operator from $F^2_\alpha$ to $,F^2_\beta$ for $\beta>\alpha.$ This follows from a similar reasoning as for the operator $D.$

Let $f\in F^2_\alpha.$ If $f(z)=\sum_{k=0}^\infty a_k z^k$, then $$\|f\|^2_{2,\alpha}=\sum_{k=0}^\infty \frac{|a_k|^2 k!}{\alpha^k}\text{     and     }
\|zf\|^2_{2,\beta}=\sum_{k=0}^\infty \frac{| a_k|^2 (k+1)!}{\beta^{k+1}}.$$
Now the inequality $$ \frac{| a_k|^2 (k+1)!}{\beta^{k+1}}\le C^2 \frac{|a_k|^2 k!}{\alpha^k}$$ is equivalent with $ k\le C^2\alpha (\beta/\alpha)^{k}$. Hence we get
$$\|M\|_2^2 = \alpha \|D\|_2^2.$$

For $f\in F^2_\alpha$ and $g\in F^2_\beta$ we get
\begin{eqnarray*}
(Mf,g)_\beta &=& (\sum_{k\ge 0} f_k \frac{z^{k+1}}{c_k} , g)_\beta\\
&=& (\sum_{k\ge 0} f_k \frac{d_{k+1}}{c_k}E_{k+1} , g)_\beta \\
&=& \sum_{k\ge 0} f_k \frac{d_{k+1}}{c_k} \overline g_{k+1}\\
&=& (f, M^*g)_\alpha,
\end{eqnarray*}
which implies that
$$M^*g = \sum_{k\ge 1}  \frac{d_{k}}{c_{k-1}}  g_{k} e_k.$$

\end{document}